\newtheorem{thm}{Theorem}[section]
\newtheorem{lem}[thm]{Lemma}
\newtheorem{prop}[thm]{Proposition}
\theoremstyle{definition}
\theoremstyle{definition}
\newtheorem{exm}[thm]{Example}
\newtheorem{remark}[thm]{Remark}
\theoremstyle{remark}
\begin{document}
   \title{On the cohomology of polyhedral products with space pairs $(D^1,S^0)$}
   \author[Li~Cai]{Li~Cai}
      \address{Graduate School of Mathematics,
		   Kyushu University, 744, motooka, nishi-ku, Fukuoka-city 819-0395, Japan}
		      \email{l-cai@math.kyushu-u.ac.jp}

    \begin{abstract}
	  In this note we prove that $H(\mathbb{R}\mathcal{Z}_{K})$, 
	  namely the integer cohomology ring of {\em polyhedral products} 
	 with space pairs $(D^1, S^0)$,
	 can also be described explicitly by a multiplicative Hochster's formula. 
    \end{abstract}

    \maketitle

The objective of this note is to understand 
$H(\mathbb{R}\mathcal{Z}_{K})$, in a way parallelly to the work related to
the cohomology of {\em moment-angle complexes} $\mathcal{Z}_{K}$ 
(see e.g. \cite{buchstaber.panov.2}, 
\cite{baskakov3}, \cite{panov}) by Buchstaber, Panov and Baskakov. 

After some constructions, an explicit formula for $H(\mathbb{R}\mathcal{Z}_{K})$
is obtained in Theorem \ref{simplicial}, 
the whole proof of which is completed later after 
Proposition \ref{cup final}. 
Some applications are discussed in Remark \ref{rem}.

\section{On the Additive Structure}
In this note let $\mathbb{Z}$ be the ring of integers and let 
all coefficients be from $\mathbb{Z}$. 

For a non-negative integer $m$, we write $[m]$ for the set $\{1,\cdots , m\}$. 
A simplicial complex $K$ on $[m]$ means the vertice set of $K$ is identified with 
$[m]$, such that 
\begin{enumerate}
  \item there is an unique emptyset which belongs to every simplex of $K$; 
  \item as a subset of $[m]$, $\sigma$ is a simplex of $K$ implies 
	that any face (subset) of $\sigma$ is a simplex.  
	\end{enumerate}
	For instance, if $K$ contains $[m]$ as a simplex,
then every subset $\sigma=\{i_0,\dots ,i_p\}$ is a $p$-simplex of $K$.

The geometric realization of $K$ will be denoted by $|K|$. By 
$\mathbb{R}\mathcal{Z}_{K}$
we denote a specific polyhedral product 
(we follow \cite{buchstaber.panov.2} on this notation) 
defined as the union
\begin{equation}
 \mathbb{R}\mathcal{Z}_{K}=\bigcup_{\sigma\in K}\{(x_1,\cdots, x_m)\in (D^1)^m|
   x_i\in S^0, \text{ when $i\not\in\sigma$}\}, \label{Def}
  \end{equation}
  thus $\mathbb{R}\mathcal{Z}_{K}$ can be embedded in the product 
  $(D^1)^m$ in an obvious way.

  \begin{exm}\label{pentagon}
   It is known that if $K$ is a simplicial $(n-1)$-sphere (a simplicial complex 
   homeomorphic to the $(n-1)$-sphere), then 
   $\mathbb{R}\mathcal{Z}_{K}$ is an $n$-dimensional manifold 
   (see \cite[p. 98]{buchstaber.panov.2}). 
	Let $K$ be the pentagon on $[5]$, namely it is a $1$-dimensional simplicial
	complex with edges 
	$\{i,i+1\}$ for $i=1,2,3,4$ and $\{1,5\}$. For the five $1$-simplices (resp. 
	five $0$-simplices) of $K$, 
   let each $1$-simplex (resp. $0$-simplex) correspond to the $2^3$ squares 
   (resp. the $2^4$ edges)
   $(D^1)^2\times (S^0)^3$ (resp. $D^1\times (S^0)^4$) and 
   let $(S^0)^5$ be the $2^5$ vertices as in definition \eqref{Def}. 
   Then by counting the Euler number we can identify this 
   $\mathbb{R}\mathcal{Z}_{K}$ as a surface with 
   genus $5$.
  \end{exm}

  \subsection{A Cell Decomposition}
  Now we give $(D^1)^m$ a $CW$ structure by a simplicial 
  decomposition to each component $D^1_{i}$ ($i=1,\dots,m$), 
  which descends to $\mathbb{R}\mathcal{Z}_{K}$ as 
  a subcomplex:
  let $\underline{01}_i$ be the $1$-simplex and 
  $0_i$, $1_i$ be two $0$-simplices  
  of the unit interval $[0,1]_{i}\cong D^1_i$ respectively, then each 
  oriented cell of $(D^1)^m$ 
  can be written as 
    \[
	  e_1\times\cdots\times e_m, \text{ $e_i=0_i$, $1_i$  or  $\underline{01}_i$}.
	\]
	With the decomposition above, we find 
	\begin{lem}\label{embedding}
	A cell $e=e_1\times\cdots\times e_m$ of $(D^1)^m$
	belongs to its subspace $\mathbb{R}\mathcal{Z}_{K}$ 
	if and only if	
	\[\sigma_e:=\{i|e_i=\underline{01}_i\}\] 
	is a simplex of $K$. 
  \end{lem}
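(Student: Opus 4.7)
The plan is to prove both implications directly from the definition of $\mathbb{R}\mathcal{Z}_K$ in \eqref{Def}, by testing membership on interior points of the cell $e$.

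For the forward direction, suppose $e \subseteq \mathbb{R}\mathcal{Z}_K$ and pick any interior point $x=(x_1,\ldots,x_m)$ of $e$. Then $x_i$ is an interior point of $e_i$, so $x_i\in S^0=\{0,1\}$ precisely when $e_i\in\{0_i,1_i\}$, equivalently when $e_i\neq\underline{01}_i$. By \eqref{Def}, since $x\in\mathbb{R}\mathcal{Z}_K$ there exists some $\sigma\in K$ with $x_i\in S^0$ for every $i\notin\sigma$. Hence $e_i\neq\underline{01}_i$ for every $i\notin\sigma$, which is exactly the statement $\sigma_e\subseteq\sigma$. Since $K$ is closed under taking subsets (property (2) of a simplicial complex), $\sigma_e$ itself is a simplex of $K$.

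For the reverse direction, suppose $\sigma_e\in K$ and take an arbitrary point $x=(x_1,\ldots,x_m)\in e$, so $x_i\in e_i$ for each $i$. Apply \eqref{Def} with the choice $\sigma=\sigma_e$: for every $i\notin\sigma_e$ we have $e_i\in\{0_i,1_i\}\subseteq S^0$, so $x_i\in S^0$. Thus $x$ lies in the $\sigma_e$-summand of the union \eqref{Def}, and therefore $e\subseteq\mathbb{R}\mathcal{Z}_K$.

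There is no serious obstacle here; the only subtlety is to keep straight the correspondence between ``$e_i$ is a vertex of $D^1_i$'' and ``the $i$-th coordinate lies in $S^0$'', and to use the fact that $K$ is closed under taking faces so that $\sigma_e\subseteq\sigma\in K$ is equivalent to $\sigma_e\in K$. One could also package the argument by noting that an interior point of $e$ witnesses the minimality of $\sigma_e$ among all $\sigma\in K$ capable of accommodating $e$, but the two-direction verification above is the most transparent route.
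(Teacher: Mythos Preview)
Your proof is correct. The paper does not actually supply a proof of this lemma; it is stated immediately after the cell decomposition with the phrase ``we find'' and left as an evident consequence of definition~\eqref{Def}. Your two-direction verification from the definition, using an interior point of $e$ to witness the forward implication and the face-closure property of $K$ to conclude $\sigma_e\in K$, is exactly the routine check the paper omits.
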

	In what follows we will abuse the notation $e_i$: it may mean a simplex or the 
	corresponding  simplicial chain, and we will declare what it means depending on 
	different situations. For a topological space $X$, we denote $S(X)$ (resp. $S^*(X)$)
	to be its singular chains (resp. cochains); 
	$C_e(X)$ (resp. $C_e^*(X)$) will mean the cellular chains (resp. cochains) 
	when $X$ is a $CW$ complex, and $C(X)$ (resp. $C^*(X)$) will be denoted for
	the simplicial chains (resp. cochains) when $X$ is a simplicial complex. The
	notation $H(X)$ (resp. $H^*(X)$) means the singular homology (resp. cohomology)
	of $X$.

	\subsection{A Brief Review}\label{cohomology 1}
Let $X=\prod_{i=1}^m|K_i|$ be 
a product space by topologized finite simplicial complexes $K_i$, 
note that now each $C^*(K_i)$ is finitely generated. 
Let ($\otimes_{i=1}^mC^*(K_i);\mathrm{d}$) be 
the differential graded abelian group with generators 
\[
e_1^*\otimes\dots \otimes e_m^* 
	   	  \] 
    where $e_i^*$ is the dual of the simplicial chain generated by $e_i$ in $K_i$,
	and whose differential $\mathrm{d}$ satisfies
	\begin{equation}
	  \mathrm{d}(e_1^*\otimes\dots\otimes e_m^*)=
	  \sum_{i=1}^{m}(-1)^{\sum_{j=1}^{i-1}\mathrm{deg}(e_j^*)}e^*_1\otimes\dots\otimes e_{i-1}^*\otimes
	  \mathrm{d}e_i^*\otimes e_{i+1}^*\otimes\dots\otimes e_m^* \label{d}
    \end{equation}
	on each generator (on the right hand side the differential $\mathrm{d}$ is the usual 
	simplicial coboundary operator).
	For instance, assume that $i=1,2,3$, 
	from $\mathrm{d}0_i^*=-\underline{01}_i^*$, $\mathrm{d}1_i^*=\underline{01}_i^*$ and $\mathrm{d}\underline{01}_i^*=0$, 
	one has
	\[
	  \mathrm{d}(0_1^*\otimes \underline{01}_2^*\otimes 1^*_3)=-\underline{01}_1\otimes\underline{01}_2\otimes 1^*_3-
	  0_1^*\otimes \underline{01}_2^*\otimes \underline{01}^*_3.
    \]
	 
	$C_e^*(X)$ means the set of cellular cochains generated by dual cells
	\[
	  (e_1\times\dots\times e_m)^*,
	  \]
	on which we define a morphism 
	\begin{equation}
	  \begin{CD}
		f:C^*(K_1)\otimes\dots\otimes C^*(K_m) @>>> C_e^*(X)\\
		e_1^*\otimes\dots\otimes e_m^* @>>> (e_1\times\dots\times e_m)^*,
	  \end{CD}
	  \label{chain map}
	\end{equation}
	which is a $1$-$1$ correspondence by a comparison of basis, thus 
	we can equip $C_{e}^*(X)$  with 
	a differential $\mathrm{d}$ such that $f$ is a cochain map, 
	and it will be denoted as $(C_{e}^*(X);\mathrm{d})$.
		\begin{prop}\label{cohomology}
	  Consider $X=\prod_{i=1}^m|K_i|$ as a $CW$ complex with 
	  cellular cochains $(C_e^*(X);\mathrm{d})$,
	  and assume that $A$ is a subcomplex of $X$ subject 
	  to this cell decomposition, 
	  then there is an isomprphism
	  \[
		H^*(A)\cong H(C_e^*(A);\mathrm{d})\]
	preserving degrees. For instance, we have isomorphisms
		\[
		  \begin{CD} 
			\displaystyle{H(\bigoplus_{\sum_{i=1}^{m}p_i=p}C^{p_1}(K_1)\otimes\dots\otimes C^{p_m}(K_m);\mathrm{d})}
			@>f>\cong>H^p(C^*_e(X);\mathrm{d})\cong H^p(X)
		 \end{CD}
		   \]
	    where $p_i$ are non-negative integers. 
	  \end{prop}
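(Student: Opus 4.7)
The plan is to reduce the proposition to two classical facts: that the cellular chain complex of a finite CW product is the Koszul tensor product of the factor cellular chain complexes, and that cellular cohomology of any CW complex (and of its subcomplexes) computes singular cohomology.

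The first step is to verify that the map $f$ in \eqref{chain map} is an isomorphism of cochain complexes when $C^*_e(X)$ is equipped with the cellular coboundary coming from the product CW structure on $X = \prod_i |K_i|$. Since $f$ is already a bijection of bases, this amounts to checking that the formula \eqref{d}, transported through $f$, coincides with the cellular coboundary. Dualizing, I would reduce this to the standard formula
\[
\partial_{\mathrm{cell}}(e_1\times\cdots\times e_m) = \sum_{i=1}^{m}(-1)^{\deg e_1+\cdots+\deg e_{i-1}}\, e_1\times\cdots\times \partial e_i\times\cdots\times e_m
\]
for the cellular boundary of a product cell, a consequence of the cellular Eilenberg--Zilber theorem (or derived directly by induction on $m$ from the two-factor case). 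The Koszul signs match those in \eqref{d} exactly.

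Once $f$ is known to be a cochain isomorphism, the second step is to invoke the cellular cohomology theorem to conclude $H(C^*_e(X); \mathrm{d}) \cong H^*(X)$. For a subcomplex $A$ of $X$, the cellular chain complex of $A$ is a subcomplex of that of $X$, since the cellular boundary of any cell of $A$ lies in $A$; dually, $(C^*_e(A); \mathrm{d})$ inherits a coboundary, and the cellular cohomology theorem applied to $A$ yields $H(C^*_e(A); \mathrm{d}) \cong H^*(A)$. The displayed ``for instance'' isomorphism then follows from the case $A = X$ combined with the grading-preserving cochain isomorphism $f$, where the degree-$p$ part of $\bigotimes_i C^*(K_i)$ is precisely $\bigoplus_{\sum p_i = p} C^{p_1}(K_1) \otimes \cdots \otimes C^{p_m}(K_m)$.

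The main obstacle is the sign bookkeeping in the first step: checking that the signs in \eqref{d} are precisely those produced by the cellular coboundary on products of oriented cells. This is routine but delicate, and it is the only substantive computation in the proof; no new ideas are required beyond the standard Koszul conventions.
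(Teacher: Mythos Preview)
Your proposal is correct and matches the paper's own argument: the paper simply remarks that a standard cellular (co)homology argument (citing Munkres, Theorem 57.1) gives the additive isomorphism, deferring the multiplicative refinement to Proposition~\ref{cup final}. You are more explicit than the paper about the one nontrivial point---that the differential $\mathrm{d}$ transported via $f$ agrees with the cellular coboundary through the Koszul boundary formula for product cells---which the paper leaves implicit.
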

    A standard cellular (co)homology argument (see e.g. \cite[Theorem 57.1, p. 339]{munkres}) will
    provide a proof without involving cup products.	
	  We will give full version of this proposition later (i.e. Proposition \ref{cup final}),
	  but here the simplicial cochain algebra of $C^*(D^1_i)$ with simplicial cup products
	  will be presented, because it motivates the 
	  algebraic construction later. Henceforth let us work with the following basis
		\begin{equation}
		  \{\mathfrak{1}_i=1^*_i+0_i^*, t_i=1_i^*, u_i=\underline{01}_i\}\label{new basis}
	  \end{equation}
		of $C^*(D^1_i)$ rather than the original $\{0_i^*, 1_i^*, \underline{01}_i^*\}$. By
		definition (see e.g. \cite[p. 292]{munkres}), we have the following relations on simplicial 
		cup products:
		\[\mathfrak{1}_i\cup t_i=t_i\cup \mathfrak{1}_i,\quad \mathfrak{1}_i\cup u_i=u_i\cup \mathfrak{1}_i, \quad
          t_i\cup t_i=t_i, \quad u_i\cup u_i=0, \quad t_i\cup u_i=0, \quad u_i\cup t_i=u_i.
		  \]

		\subsection{An Algebraic Description}
	In what follows, $x_{\sigma}$ will mean the monomial $x_{i_0}\dots x_{i_p}$
    when $\sigma=\{i_0, \dots ,i_p\}$ is a subset of $[m]$. 
	By $\mathbb{Z}[u_1,\dots ,u_m;t_1,\dots ,t_m]$ we denote the differential graded
	free $\mathbb{Z}$-algebra with $2m$ generators, such that 
   \begin{equation}
   \mathrm{deg}u_i=1, \quad
   \mathrm{deg}t_i=0; \quad
   \mathrm{d}u_i=0, \quad \mathrm{d}t_i=u_i,
   \label{deg. rel.}
   \end{equation}
   where the differential $\mathrm{d}$ follows the rule \eqref{d} on each 
   monomial.
   Let $R$ be the quotient algebra of $\mathbb{Z}[u_1,\dots ,u_m;t_1,\dots ,t_m]$ subject 
   to the following relations
   \begin{equation}
	 u_it_i=u_i,\quad t_iu_i=0,\quad u_it_j=t_ju_i, 	 
	 \quad t_it_i=t_i, \quad u_iu_i=0,
	 \quad u_iu_j=-u_ju_i, \quad t_it_j=t_jt_i, \label{relation}
	 \end{equation}
   for $i,j=1,\dots,m$ such that $i\not=j$, in which the Stanley-Reisner ideal 
   $\mathcal{I}_K$ is defined to be generated by all square-free 
	monomials $u_\sigma$ such that $\sigma$ is not a simplex of $K$. 

	One can check that the relations \eqref{relation} listed above are compatible with 
	the differential $\mathrm{d}$, for instance, 
	\[
	  \mathrm{d}(t_it_i)=u_it_i+t_iu_t=u_it_i=u_i=\mathrm{d}t_i.
	  \]
		
	\begin{thm}\label{additive part}
	  There is a (degree-preserving) ring isomorphism
	  \begin{equation}
		H(R/\mathcal{I}_K;\mathrm{d})
		\cong H(\mathbb{R}\mathcal{Z}_{K}).
		\label{add. iso.}
	  \end{equation}
	\end{thm}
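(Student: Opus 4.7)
The plan is to construct an explicit degree-preserving isomorphism of differential graded algebras
\[
\bar\phi:(R/\mathcal{I}_K;\mathrm{d})\longrightarrow (C_e^*(\mathbb{R}\mathcal{Z}_{K});\mathrm{d}),
\]
from which Proposition \ref{cohomology}, applied to the $CW$ subcomplex $\mathbb{R}\mathcal{Z}_{K}\subset(D^1)^m$, yields the claimed ring isomorphism \eqref{add. iso.}. First I would lift $\bar\phi$ to a DG algebra map $\phi:R\to C_e^*((D^1)^m)$ sending $u_i\mapsto \underline{01}_i^*$, $t_i\mapsto 1_i^*$, and the unit $1\in R$ to the tensor product of the cochain-algebra units $\mathfrak{1}_i=0_i^*+1_i^*$. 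The cup-product identities listed right after \eqref{new basis} show that in each $C^*(D^1_i)$ the elements $\mathfrak{1}_i,t_i,u_i$ satisfy precisely the single-index relations of \eqref{relation}, while the graded tensor product structure of $\bigotimes_{i=1}^m C^*(D^1_i)=C_e^*((D^1)^m)$ accounts for the cross-index relations $u_iu_j=-u_ju_i$, $t_it_j=t_jt_i$, $u_it_j=t_ju_i$. The differential \eqref{deg. rel.} matches the simplicial coboundary on each factor under the change of basis \eqref{new basis}, so $\phi$ is a chain map. A normal-form argument using the single-index relations, together with the commutations for distinct indices, shows that $\{u_\sigma t_\tau:\sigma,\tau\subseteq[m],\ \sigma\cap\tau=\emptyset\}$ is a $\mathbb{Z}$-basis of $R$ and is sent bijectively by $\phi$ to the $\{\mathfrak{1},t,u\}$-tensor basis of $\bigotimes C^*(D^1_i)$.

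Next I would compose $\phi$ with the restriction $r:C_e^*((D^1)^m)\twoheadrightarrow C_e^*(\mathbb{R}\mathcal{Z}_{K})$ dual to the inclusion of cellular chains. Expanding each $\mathfrak{1}_i=0_i^*+1_i^*$ gives
\[
\phi(u_\sigma t_\tau)=\sum_{\epsilon}(e_1^\epsilon\times\cdots\times e_m^\epsilon)^*,
\]
where $\epsilon$ ranges over $\{0,1\}^{[m]\setminus(\sigma\cup\tau)}$, with $e_i^\epsilon=\underline{01}_i$ for $i\in\sigma$, $e_i^\epsilon=1_i$ for $i\in\tau$, and $e_i^\epsilon=0_i$ or $1_i$ according to $\epsilon_i$ for the remaining $i$. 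By Lemma \ref{embedding} every such cell lies in $\mathbb{R}\mathcal{Z}_{K}$ precisely when $\sigma\in K$, so $r\circ\phi$ annihilates exactly those generators $u_\sigma t_\tau$ with $\sigma\notin K$, which is a generating set of $\mathcal{I}_K$. Consequently $r\circ\phi$ descends to the desired $\bar\phi:R/\mathcal{I}_K\to C_e^*(\mathbb{R}\mathcal{Z}_{K})$, which is automatically a DG algebra map.

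The last and hardest step is to verify that $\bar\phi$ is bijective. Fix $\sigma\in K$ and index the cells of $\mathbb{R}\mathcal{Z}_{K}$ having $\sigma_e=\sigma$ by $\eta\in\{0,1\}^{[m]\setminus\sigma}$, so that $\bar\phi(u_\sigma t_\tau)$ equals the sum of those dual cells for which $\eta_i=1$ whenever $i\in\tau$. The transition between the two bases (indexed, on one side, by $\tau\subseteq[m]\setminus\sigma$ and, on the other, by $\eta\in\{0,1\}^{[m]\setminus\sigma}$) is then the zeta matrix of the Boolean lattice on $[m]\setminus\sigma$, which M\"obius inversion identifies as invertible over $\mathbb{Z}$. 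Assembling over all $\sigma\in K$ yields bijectivity, and passing to cohomology together with Proposition \ref{cohomology} gives the ring isomorphism \eqref{add. iso.}.
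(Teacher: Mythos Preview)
Your additive argument is essentially the paper's: you build the same map (the paper's $\theta$, your $\phi$) from $R$ to $\bigotimes_i C^*(D^1_i)\cong C_e^*((D^1)^m)$ using the basis $\{\mathfrak{1}_i,t_i,u_i\}$, and then identify $\mathcal{I}_K$ with the kernel of the restriction $\mathrm{i}^*$ via Lemma~\ref{embedding}. One simplification the paper exploits that you do not: since $\{\mathfrak{1}_i,t_i,u_i\}$ is already a $\mathbb{Z}$-basis of $C^*(D^1_i)$, the pure tensors with factors in $\{\mathfrak{1}_i,t_i,u_i\}$ form a basis of $\bigotimes_i C^*(D^1_i)$, and $\phi$ visibly carries the monomial basis $\{u_\sigma t_\tau\}$ of $R$ bijectively onto this tensor basis. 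Bijectivity of $\phi$ and of $\bar\phi$ is then immediate from a comparison of bases, with the $\sigma$-grading preserved on both sides; your M\"obius-inversion detour through the $\{0_i^*,1_i^*,\underline{01}_i^*\}$ basis is correct but unnecessary.

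There is, however, a genuine gap on the multiplicative side. You invoke Proposition~\ref{cohomology} to pass from $H(C_e^*(\mathbb{R}\mathcal{Z}_K);\mathrm d)$ to $H^*(\mathbb{R}\mathcal{Z}_K)$ and conclude a \emph{ring} isomorphism, but Proposition~\ref{cohomology} as stated is purely additive. What remains to be shown is that the graded-tensor-product multiplication you put on $C_e^*(\mathbb{R}\mathcal{Z}_K)$ (componentwise simplicial cup product, with the sign $\varepsilon$) actually induces the singular cup product on $H^*(\mathbb{R}\mathcal{Z}_K)$. That requires an Eilenberg--Zilber/acyclic-models comparison and is precisely the content of Proposition~\ref{cup final}; the paper's own proof of Theorem~\ref{additive part} explicitly defers the ring statement until after that proposition. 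Without this step, your DGA isomorphism $\bar\phi$ only gives a ring isomorphism onto $H(C_e^*(\mathbb{R}\mathcal{Z}_K))$ equipped with an a~priori auxiliary product, not onto $H^*(\mathbb{R}\mathcal{Z}_K)$ with its cup product.
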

	\begin{proof}
	  	  Here we only prove the additive part of this theorem, and the whole 
		  proof involving products will be completed later after  
		  Proposition \ref{cup final}. 

		  We start from a cochain map, which is generated by mapping 
	  each square-free monomial to a tensor product of simplicial cochains, 
	  namely
	  \[
		\begin{CD}
		\theta:R @>>> C^*(D^1_i)\otimes\dots\otimes C^*(D^1_m)\\
         x_1\dots x_m @>>> e_1^*\otimes\dots\otimes e_m^*,
 \end{CD}
 \]
 in which (we are using the basis \eqref{new basis}) 
 \[e_i^*=\begin{cases}\mathfrak{1}_i, & \text{if $x_i=1$},\\
   t_i,   & \text{if $x_i=t_i$},\\
   u_i,   & \text{if $x_i=u_i$}.
 \end{cases}\]
 The additive generators of $R$ are square-free monomials 
 follows from the relations \eqref{relation}, and by definition 
 $\theta$ induces an isomorphism between abelian groups such that 
 $\mathrm{d}\circ\theta=\theta\circ\mathrm{d}$.
 Then from the following diagram about cochain maps (in which $\mathrm{q}$ is the quotient 
 and $\mathrm{i}^*$ is induced by the inclusion)
  \begin{equation}
		\begin{CD}
	    R		@>f\circ\theta>\cong> C^*_e(D^1_1\times\dots\times D^1_m)   \\
		@V \mathrm{q} VV                                                           @V\mathrm{i}^*VV                                  \\
		R/\mathcal{I}_{K}@>>\cong>
	 C^*_e(\mathbb{R}\mathcal{Z}_{K}),
   \end{CD}\label{correspondence}
	  \end{equation}
together with Lemma \ref{embedding}, we observe that the quotient part 
in the Stanley-Reisner ideal corresponds to the cells not contained in 
$\mathbb{R}\mathcal{Z}_{K}$, thus 
by Proposition \ref{cohomology}, this part of proof is completed.
	\end{proof}
	\subsection{A Combinatorial Description (Hochster's Formula)}\label{Hochster}
	With the notation $x_\sigma$, we can write each square-free 
	monomial of $R/\mathcal{I}_{K}$ in the form 
	\[u_{\sigma}t_{\tau}, \quad \sigma\cap\tau=\emptyset,\quad \sigma\in K \]
	(for instance, $u_2t_3u_4t_5t_6=u_{2,4}t_{3,5,6}$). 
    
	When $\omega$ is a subset of $[m]$ (may be empty), we denote $K_{\omega}$ to be 
	the full subcomplex of $K$ in $\omega$, namely it is the subcomplex
	\[
	  \{\sigma\cap \omega|\sigma\in K\},
	  \]
	  and $R/\mathcal{I}_{K}|_{\omega}$ will be denoted as a subgroup of $R$ (as a group) 
	  generated by 
	\[
	  \{u_{\sigma}t_{\tau}| \sigma\in K \ 
	   s.t. \ \sigma\sqcup\tau=\omega, \ \sigma\cap\tau=\emptyset\}.
	  \]
	  We observe that $R/\mathcal{I}_{K}|_{\omega}$ is closed 
	  under the differential $\mathrm{d}$ and 
       $R/\mathcal{I}_{K}=\bigoplus_{\omega\subset [m]}R/\mathcal{I}_{K}|_{\omega}$, therefore
	\[
	  H(R/\mathcal{I}_{K};\mathrm{d})\cong
	  \bigoplus_{\omega\subset [m]}H(R/\mathcal{I}_{K}|_{\omega};\mathrm{d}).
	  \]
	Each $\omega\subset [m]$ yields 
	a $1$-$1$ cochain map (on the right hand side we use the usual 
	simplicial coboundary operator)
	\begin{equation}
	  \begin{CD}
		\lambda_{\omega}: R/\mathcal{I}_{K}|_{\omega} @>\cong>> C^*(K_{\omega})\\
		                 u_{\sigma}t_{\tau}@>>> \sigma^*,
					   \end{CD}\label{lambda}
	  \end{equation}
	  which induces an additive isomorphism of cohomology
	  \[
		\begin{CD}
		  H^p(R/\mathcal{I}_{K}|_{\omega})@>\lambda_{\omega}>\cong> \widetilde{H}^{p-1}(K_{\omega}),
	  \end{CD}
		\]
		because 
		\[\mathrm{d}(t_1\dots t_m)=\sum_{i=1}^{m}t_1\dots t_{i-1}u_it_{i+1}\dots t_m\]
		behaves as the augmentation and $\lambda_{\emptyset}^*$ maps $H^0(R)$ 
		isomorphically onto $\widetilde{H}^{-1}(\emptyset)$.

		\begin{remark}
    Indeed, $\lambda$ induces cochain maps since every time we jump over
	an index in $\sigma$, we will get a $-1$. 
	Here the construction of $\lambda$ is motivated by Baskakov's idea in \cite{baskakov}.
		\end{remark}
      
		Therefore we can equip $\bigoplus_{\omega\subset [m]}\widetilde{H}^{p-1}(K_{\omega})$ with 
	  a product structure described by simplicial cochains, such that cochain map $\lambda$ induces an 
	  isomorphism between cochain algebras.
	  Together with Theorem \ref{additive part}, we have the following theorem,
	  the full proof of which will be completed later. 

	  \begin{thm}\label{simplicial}
		There are isomorphisms
		\[
		\begin{CD}
		  \displaystyle{\bigoplus_{\omega\subset [m]}\widetilde{H}^{p-1}(K_{\omega})}@>\cong> \lambda^{-1}>
		  H^p(R/\mathcal{I}_{K})@>\cong>> H^p(\mathbb{R}\mathcal{Z}_{K})
		\end{CD}
		\]
	  for all positive integers $p$. Moreover, after endowing 
	  $\bigoplus_{\omega\subset [m]}\widetilde{H}^{p-1}(K_{\omega})$ the product structure induced by $\lambda$, 
	  they are isomorphisms between $\mathbb{Z}$-algebras.
	  \end{thm}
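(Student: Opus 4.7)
The plan is to treat the two isomorphisms of the theorem separately, establishing each additively first and then promoting both to ring maps.

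\textbf{Additive part.} The right-hand isomorphism $H^p(R/\mathcal{I}_K)\cong H^p(\mathbb{R}\mathcal{Z}_K)$ is precisely the additive content of Theorem \ref{additive part}. For the left-hand isomorphism I would invoke the decomposition $R/\mathcal{I}_K = \bigoplus_\omega R/\mathcal{I}_K|_\omega$ (compatible with $\mathrm{d}$, as observed in Section \ref{Hochster}) together with the cochain isomorphisms $\lambda_\omega\colon R/\mathcal{I}_K|_\omega \to C^*(K_\omega)$. The degree shift by one is explained by the fact that $\mathrm{d}(t_1\cdots t_m)$ plays the role of the simplicial augmentation, so that $\lambda_\omega$ induces $H^p(R/\mathcal{I}_K|_\omega) \cong \widetilde{H}^{p-1}(K_\omega)$; summing over all $\omega$ yields the additive iso on the left.

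\textbf{Multiplicative part.} By construction the product on $\bigoplus_\omega \widetilde{H}^{p-1}(K_\omega)$ is the one transported from $H^*(R/\mathcal{I}_K)$ along $\lambda$, so the left iso is a ring map by definition. What remains is to promote the right iso. Using diagram \eqref{correspondence}, I would first verify that the defining relations \eqref{relation} for $R$ reproduce, on generators, the simplicial cup product relations listed at the end of Section \ref{cohomology 1} together with the Koszul sign rule on the graded tensor product; hence $\theta$ is a DG algebra isomorphism. Proposition \ref{cup final} will then supply the fact that $f$ is a DG algebra map onto cellular cochains equipped with their natural cellular cup product, and restricting to the Stanley--Reisner quotient produces a cochain-level ring iso $R/\mathcal{I}_K \cong C^*_e(\mathbb{R}\mathcal{Z}_K)$. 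Taking cohomology finishes the proof.

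\textbf{Main obstacle.} The real technical content is Proposition \ref{cup final}: showing that the additive iso $f$ is compatible with cup products. The cellular cup product on $\prod_i|K_i|$ depends a priori on a cellular approximation of the diagonal, whereas the tensor-product DG algebra structure on $\bigotimes_i C^*(K_i)$ is intrinsic. For our situation I would fix an explicit cellular approximation of $\Delta_{D^1_i}$ on each interval (straightforward, since $D^1_i$ has only three cells) and verify by induction on $m$ that the induced diagonal on $(D^1)^m$ is of shuffle type, so that $f$ carries the tensor-product cup product to the cellular cup product on the nose. Because each factor is one-dimensional, no higher cellular homotopies enter the calculation, and the verification reduces to a finite check on generators, after which the Stanley--Reisner quotient and passage to cohomology are routine.
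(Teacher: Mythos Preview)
Your additive argument and your treatment of the left isomorphism match the paper exactly. The genuine divergence is in how you handle the multiplicative compatibility on the right, i.e.\ the content of Proposition~\ref{cup final}. The paper proves that proposition in full generality (arbitrary finite simplicial factors $K_i$) via acyclic models: diagram~\eqref{chain homotopy} is shown to commute up to a natural chain homotopy, which yields the shuffle formula~\eqref{shuffle} only up to cochain homotopy, and the subcomplex case is then deduced from naturality of $\mu$ together with a colimit description of $S(A)$. Your route is instead to exploit that each factor is the interval: write down the Alexander--Whitney diagonal on $D^1_i$ explicitly, take the tensor product to get a cellular diagonal on $(D^1)^m$, and check on generators that the resulting cellular cup product agrees with the product in $R$. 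This is more elementary and, because the factors are $1$-dimensional, actually gives agreement at the cochain level rather than merely up to homotopy; it also makes the restriction to the subcomplex $\mathbb{R}\mathcal{Z}_K$ immediate, since a cellular diagonal restricts to subcomplexes. What you lose is generality: the paper's acyclic-models argument would apply verbatim to polyhedral products built from higher-dimensional simplicial pairs, whereas your finite check is tied to $(D^1,S^0)$. For the theorem as stated, your approach is sufficient and arguably cleaner.
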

	   Thus the cohomology ring of $\mathbb{R}\mathcal{Z}_{K}$ will be clear once we 
	   write down all cochains representing the (reduced) simplicial cohomology 
	   for each $K_{\omega}$.
	  \begin{exm}Assume that $K$ is the pentagon described in Example \ref{pentagon}, let 
		us compute the cohomology of $\mathbb{R}\mathcal{Z}_{K}$.
		First we can write down all the full subcomplexes of $K$ with non-trivial cohomology, namely
		\begin{align*}
		  1:&\widetilde{H}^{-1}(K_{\emptyset})|1; \\
		  \alpha_1-\alpha_5:&\widetilde{H}^0(K_{1,3})|u_1t_3, \widetilde{H}^0(K_{1,4})|u_1t_4, 
          \widetilde{H}^0(K_{2,4})|u_2t_4, \widetilde{H}^0(K_{2,5})|u_2t_5, 
		  \widetilde{H}^0(K_{3,5})|u_3t_5;\\
		  \beta_1-\beta_5:&\widetilde{H}^0(K_{1,3,4})|u_1t_{3,4}, \widetilde{H}^0(K_{2,4,5})|u_2t_{4,5}, 
		  \widetilde{H}^0(K_{3,5,1})|u_3t_{1,5}, \widetilde{H}^0(K_{4,1,2})|u_4t_{1,2}, 
		  \widetilde{H}^0(K_{5,2,3})|u_5t_{2,3};\\
		  \gamma:&\widetilde{H}^1(K_{1,2,3,4,5})|u_{1,2}t_{3,4,5},
		  \end{align*}
           in which we write a generator for each group, via $\lambda$. Thus (by Theorem \ref{simplicial}) 
           $H(\mathbb{R}\mathcal{Z}_{K})$ has ten generators in degree $1$ 
		   (i.e. $\alpha_i$, $\beta_i$, $i=1,\dots ,5$), and one 
		   generator in degree $0$ (i.e. $1$ the unit) and degree $2$ 
		   (i.e. $\gamma$) respectively.            
		   Note that these generators may not be unique: for instance,
		 \[
		   \mathrm{d}(u_1t_{2,3,4,5})=-u_{1,2}t_{3,4,5}-u_{1,5}t_{2,3,4}+
		   \text{terms equivalent to $0$},
		   \]
		   and in the same way $-u_1t_{3,4}$ is equivalent to $u_3t_{1,4}+u_4t_{1,3}$.
		   A little computation shows, up to equivalences in cohomology, all non-trivial cup products are listed
		   as follows (where we omit the notation $\cup$)
		   \[\gamma=\alpha_1\beta_2=\alpha_2\beta_5=\alpha_3\beta_3=\alpha_4\beta_1=
			 \alpha_5\beta_4=\beta_1\beta_2=\beta_2\beta_3=\beta_3\beta_4=\beta_4\beta_5=\beta_5\beta_1.
			 \]
		   We can use the construction 
		   \eqref{correspondence} 
		   in the proof of Theorem \ref{additive part} 
		   to identify all corresponding cellular cochains. 
		\end{exm}
		\begin{remark}\label{rem}
		From Theorem \ref{simplicial}, after a comparison with 
		the result in \cite{buchstaber.panov.2}, it follows that 
		there is an additive isomprphism between 
		 $H(\mathcal{Z}_{K})$ and $H(\mathbb{R}\mathcal{Z}_{K})$
		 if we forget about degrees.
        For instance, we can also 
		find arbitrary torsions in $H(\mathbb{R}\mathcal{Z}_{K})$ 
		when $K$ are special simplicial spheres (see \cite[Theorem 11.11]{BM04}).  
        As algebras, the difference between 
		$H(\mathcal{Z}_{K})$ and $H(\mathbb{R}\mathcal{Z}_{K})$
		comes from defferent sign rules and square rules.  
 	  \end{remark}
		\section{On the Cup Product}
		In this section we will describe cup products in Proposition \ref{cohomology}, 
		and then finish the proofs of Theorem \ref{additive part} and 
	Theorem \ref{simplicial}. 

    Assume that $X$ and $Y$ are two topological spaces, we define a map 
	$ g:S^*(X)\otimes S^*(Y)\to \mathrm{Hom}(S(X)\otimes S(Y),\mathbb{Z})$
	 generated by 
	  \begin{equation}
		g(c^*_X\otimes c^*_Y)(c'_X\otimes c'_Y)= c^*_X(c'_X) c^*_Y(c'_Y), 
		\quad c^*_X\in S^*(X), \ \ c^*_Y\in S^*(Y),\label{define g}
	  \end{equation}
	  (the evaluation is trivial when degrees do not match) 
	  for each singular trangles $c'_X$ and $c'_Y$.

	By Eilenberg-Zilber theorem, there is a 
	natural chain equivalence $\mu$ between singular chain 
	complexes (unique up to chain homotopy, see \cite{eilenberg. maclane. 2}; 
	the defferential on tensors follows the rule \eqref{d})
	\[
	\begin{CD}
	  S(X\times Y)@>\mu_{X\times Y}>\cong> S(X)\otimes S(Y),
  \end{CD}
  \]	
  and the cup product is defined 
  via the following 
\[
  \begin{CD}
	S^*(X)\otimes S^*(X)@>\mu^*_{X\times X}\circ g >> S^*(X\times X)@>d^*>> S^*(X)
  \end{CD}
  \]
  in which $d^*$ is induced by the diagonal map $X\to X\times X$ (the {\em cross product} 
  is defined by $\mu^*_{X\times X}\circ g$).
 
  From now on let $\mathrm{Top}^m$ be the category with 
  $m$-products of toplogical spaces as objects, whose morphisms
  are $m$-products of continuous maps, and let $\mathcal{C}$ be 
  the category of chain complexes equiped with chain maps.

  Consider $X=\prod_{i=1}^m|K_i|$ as an object of 
  $\mathrm{Top}^m$: a product by finite topologized 
  simplicial complexes.

  Thus in the following diagram, all morphisms can be considered
  as natural transformations between functors from 
  $\mathrm{Top}^m$ to $\mathcal{C}$, where $d_i$ is the diagonal chain map 
  from $S(|K_i|)$ to $S(|K_i|)\otimes S(|K_i|)$, and $\widetilde{\mu}$ 
  is defined by using $\mu$ for $m-1$ times, 
  factoring out one component each time:
  \begin{equation}
   \xymatrix{	
	 &  S(\prod_{i=1}^m|K_i|)\otimes S(\prod_{i=1}^m|K_i|)
	 \ar[r]^{\widetilde{\mu}_{X}\otimes\widetilde{\mu}_{X}} &
	 \otimes_{i=1}^{m}S(|K_i|)\otimes \otimes_{i=1}^{m}S(|K_i|)  \\
	 S(\prod_{i=1}^{m}|K_i|) \ar[ru]^{\mu_{X\times X}\circ d} \ar[rd]^{\widetilde{\mu}_{X}}   \\
	 & S(|K_1|)\otimes\dots\otimes S(|K_m|)
	 \ar[r]^{\otimes_{i=1}^{m}\mu_{|K_i|\times |K_i|}\circ d_i}      
	&  \otimes_{i=1}^{m}S(|K_i|)\otimes S(|K_i|)	\ar[uu]^T 	,	
  }\label{chain homotopy}
	 \end{equation}
	in which $T$ is defined as follows. 
	For each tensor by singular triangles 
	$\otimes_{i}c_i\otimes c'_i\in\otimes_{i=1}^{m}S(|K_i|)\otimes S(|K_i|)$, 
  \[
	T(c_1\otimes c'_1\otimes\dots\otimes c_m\otimes c'_m)=
	(-1)^{\varepsilon(c,c')}c_1\otimes\dots\otimes c_m\otimes c'_1\otimes\dots\otimes c'_m	\]
	where the $\mathrm{mod}$ $2$ invariant 
		$\varepsilon(c,c')$ is 
	generated by this rule: each operation of changing the order of 
	any adjacent two terms will produce a multiplication 
	of their degrees, thus we obtain $\varepsilon(c,c')$ by 
	summing up these multiplications 
	for an arbitrary sequence of operations, for instance one 
	choice for $\varepsilon(c,c')$ can be 
	\[
	  \varepsilon(c,c')=\sum_{i=1}^{m}\mathrm{deg}c'_i\sum_{j>i}\mathrm{deg}c_j.
	  \]
	It is straightforward to check that $T$ is a chain map. 
	From the method of {\em acyclic model} with 
	the model category as $m$-products of 
	topologized simplices (see \cite{eilenberg. maclane. 1}, 
	the case for $2$-products can be found in \cite[p. 252]{spanier}),  
	it follows that there is a natural 
	chain homotopy between 
	\[
	  \widetilde{\mu}_X\otimes\widetilde{\mu}_X\circ \mu_{X\times X}\circ d 
	  \text{\quad and \quad } T\circ 
	  (\otimes_{i=1}^{m}\mu_{|K_i|\times |K_i|}\circ d_i)\circ\widetilde{\mu}_X.
	  \] 
	Therefore after taking $\mathrm{Hom}$ to diagram \eqref{chain homotopy}, we obtain the following 
	(where $g_1$, $g_2$ are defined from \eqref{define g}) 
	\[\begin{CD}
		\otimes_{i=1}^{m}C^*(K_i)\otimes \otimes_{i=1}^{m}C^*(K_i)@>
		\widetilde{T}^*:=g^{-1}_2 \circ T^*\circ g_1 >>
	  \otimes_{i=1}^{m}C^*(K_i)\otimes C^*(K_i)\\
       @V g_1 VV                                 @V g_2 VV   \\
	   \mathrm{Hom}(\otimes_{i=1}^{m}S(K_i)\otimes \otimes_{i=1}^{m}S(K_i),\mathbb{Z})@>T^*>>
	   \mathrm{Hom}(\otimes_{i=1}^{m}S(K_i)\otimes S(K_i),\mathbb{Z}),
	\end{CD}
	  \]
	  in which $\widetilde{T}^*$ is well defined since 
	  $\mathrm{Im}(T^*\circ g_1)\subset \mathrm{Im}(g_2)$, 
	  and here every $C^*(K_i)$ is considered as a subgroup of $S^*(|K_i|)$ ($i=1,\dots ,m$), 
	  in which only simplicial chains are non-trivially evaluated.

	  \begin{prop}\label{cup final}
		Assume that $e^*=\otimes_{i=1}^me_i^*$ and ${e^*}'=\otimes_{i=1}^{m}{e^*}_i'$ are tensors of 
		dual simplices in $(\otimes_{i=1}^mC^*(K_i)\subset)\otimes_{i=1}^mS^*(|K_i|)$, 
		then as elements in $S^*(\prod_{i=1}^{m}|K_i|)$ on both sides, we have  	
		\begin{equation}
		  \widetilde{\mu}^{*}_{X}\circ g (\otimes_{i=1}^me_i^*)\cup
		  \widetilde{\mu}^{*}_{X}\circ g (\otimes_{i=1}^{m}{e^*}_i')=
		  (-1)^{\varepsilon(e^*,{e^*}')}\widetilde{\mu}^{*}_{X}\circ g
	  (\otimes_{i=1}^{m}e_i^*\cup {e^*}_i'),
	  \label{shuffle}
	  \end{equation}
	  up to a cochain homotopy. Moreover, 
	  consider $X=\prod_{i=1}^{m}|K_i|$ as a $CW$ complex whose cells are $m$-products
	  of (topologized) simplices with $i$-th component in $K_i$, and suppose that $A$ is a subcomplex of $X$ with 
	  $e^*$ and ${e^*}'$ in $C^*_e(A)$, then as elements in $S^*(A)$, 
	  \eqref{shuffle} holds up to a cochain homotopy.
	\end{prop}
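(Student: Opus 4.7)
The plan is to dualize diagram \eqref{chain homotopy} and read \eqref{shuffle} off from the acyclic-model chain homotopy between its two compositions. After applying $\mathrm{Hom}(-,\mathbb{Z})$, the natural chain homotopy between $(\widetilde{\mu}_X\otimes \widetilde{\mu}_X)\circ\mu_{X\times X}\circ d$ and $T\circ(\otimes_{i=1}^m\mu_{|K_i|\times |K_i|}\circ d_i)\circ\widetilde{\mu}_X$ transposes into a cochain homotopy between
\[
d^{*}\circ\mu^{*}_{X\times X}\circ(\widetilde{\mu}_X\otimes\widetilde{\mu}_X)^{*}\quad\text{and}\quad \widetilde{\mu}_X^{*}\circ\bigl(\textstyle\bigotimes_{i=1}^m(d_i^{*}\circ\mu^{*}_{|K_i|\times|K_i|})\bigr)\circ T^{*}.
\]

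Next I would expand the LHS of \eqref{shuffle} via the definition of cup product. By definition,
\[
\widetilde{\mu}^{*}_X g(\otimes e_i^{*})\cup\widetilde{\mu}^{*}_Xg(\otimes{e_i^{*}}')=d^{*}\mu^{*}_{X\times X}\,g\bigl(\widetilde{\mu}^{*}_Xg(\otimes e_i^{*})\otimes\widetilde{\mu}^{*}_Xg(\otimes{e_i^{*}}')\bigr),
\]
and by the definition of $g$ the argument of $d^{*}\mu^{*}_{X\times X}$ rewrites as $(\widetilde{\mu}_X\otimes\widetilde{\mu}_X)^{*}g_1\bigl(\otimes e_i^{*}\otimes\otimes{e_i^{*}}'\bigr)$. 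Substituting the cochain homotopy from the previous step replaces the LHS up to cochain homotopy by $\widetilde{\mu}^{*}_X\circ\bigl(\textstyle\bigotimes_i d_i^{*}\mu^{*}_{|K_i|\times|K_i|}\bigr)\circ T^{*}g_1\bigl(\otimes e_i^{*}\otimes\otimes{e_i^{*}}'\bigr)$. A direct computation from the explicit formula for $T$, together with $\mathrm{deg}(e_i^{*})=\mathrm{deg}(e_i)$, yields $T^{*}g_1\bigl(\otimes e_i^{*}\otimes\otimes{e_i^{*}}'\bigr)=(-1)^{\varepsilon(e^{*},{e^{*}}')}g_2\bigl(\otimes_i(e_i^{*}\otimes{e_i^{*}}')\bigr)$; and the $i$-th factor $d_i^{*}\mu^{*}_{|K_i|\times|K_i|}g(e_i^{*}\otimes{e_i^{*}}')$ is, by the very definition of the cup product on $|K_i|$, the class $e_i^{*}\cup{e_i^{*}}'$. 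Collecting everything produces the RHS of \eqref{shuffle}.

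For the assertion on a subcomplex $A\subset X$, the crucial point is that every map occurring in \eqref{chain homotopy}---the diagonals, the Eilenberg-Zilber equivalences, the shuffle $T$, and the acyclic-model chain homotopy itself---is natural in the space arguments, so the inclusion-induced map $\mathrm{i}^{*}:S^{*}(X)\to S^{*}(A)$ carries both sides of the established cochain homotopy into $S^{*}(A)$; combined with naturality of $\cup$ under $\mathrm{i}^{*}$, this yields the same identity in $S^{*}(A)$. The main technical obstacle is sign bookkeeping: one must verify that the sign extracted from $T^{*}$ on an iterated tensor of dual simplicial cochains matches exactly the invariant $\varepsilon(e^{*},{e^{*}}')$, and that the answer is independent (as asserted) of the particular sequence of adjacent transpositions used to compute $\varepsilon$. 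The acyclic-model framework absorbs any residual discrepancy, since only the homotopy class of either composition is canonical, which is precisely why \eqref{shuffle} can only be expected to hold up to cochain homotopy rather than on the nose.
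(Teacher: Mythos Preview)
Your argument for the first statement is essentially the paper's own: expand the cup product via $d^{*}\circ\mu_{X\times X}^{*}$, rewrite the argument as $(\widetilde{\mu}_X\otimes\widetilde{\mu}_X)^{*}\circ g_1$, invoke the acyclic-model chain homotopy from diagram~\eqref{chain homotopy}, and then identify $T^{*}\circ g_1$ with $(-1)^{\varepsilon}g_2$ followed by the factor-wise cup products. The chain of equalities you wrote matches the paper's display line for line.

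For the subcomplex part your route diverges from the paper's. You simply push the already-established identity in $S^{*}(X)$ forward through $\mathrm{i}^{*}\colon S^{*}(X)\to S^{*}(A)$, using that $\mathrm{i}^{*}$ is a ring map commuting with $d$, hence carries the cochain-homotopy term to a cochain-homotopy term. The paper instead rebuilds diagram~\eqref{chain homotopy} intrinsically for $A$: it writes $S(A)\simeq \mathrm{colim}_{e\in E_A}S(\prod_i e_i)$ (each cell is an object of $\mathrm{Top}^m$, where the naturality genuinely lives), restricts $\widetilde{\mu}_X$ to a map $\widetilde{\mu}_X|_A$ on this colimit, introduces an evaluation $g_A$, and reruns the whole argument there. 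Your shortcut is legitimate because $A$ sits inside the product $X$ and extension-by-zero $C_e^{*}(A)\hookrightarrow C_e^{*}(X)\cong\bigotimes_i C^{*}(K_i)$ followed by $\mathrm{i}^{*}$ gives back the intended element of $S^{*}(A)$; the paper's colimit version buys an intrinsic construction independent of the ambient $X$. One caution: your phrase ``every map in \eqref{chain homotopy} is natural in the space arguments'' refers to naturality in $\mathrm{Top}^m$, and the inclusion $A\hookrightarrow X$ is \emph{not} a morphism there; your argument does not actually need this, only that $\mathrm{i}^{*}$ respects $\cup$ and $d$, so you should phrase it that way. The closing remarks about ``residual discrepancy'' absorbed by acyclic models are unnecessary---there is none once the sign computation for $T^{*}$ is done.
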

	\begin{proof}
	  After a little chasing
	  on the diagram \eqref{chain homotopy}, the first statement follows from
	  \begin{align*} 
		\widetilde{\mu}^{*}_{X}\circ g (\otimes_{i=1}^me_i^*)\cup
		  \widetilde{\mu}^{*}_{X}\circ g (\otimes_{i=1}^{m}{e^*}_i')&=
     d^* \circ \mu^*_{X\times X}\circ\widetilde{\mu}^*_X\otimes\widetilde{\mu}^*_X\circ g_1(\otimes_{i=1}^me_i^*\otimes\otimes_{i=1}^{m}{e^*}_i')\\
	 &=\widetilde{\mu}^*_X\circ(\otimes_{i=1}^{m}\mu_{|K_i|\times |K_i|}\circ d_i)^*
	 \circ T^*\circ g_1(\otimes_{i=1}^me_i^*\otimes\otimes_{i=1}^{m}{e^*}_i')\\
	 &=(-1)^{\varepsilon(e^*,{e^*}')}\widetilde{\mu}^*_X\circ(\otimes_{i=1}^{m}\mu_{|K_i|\times |K_i|}\circ d_i)^*
	 \circ g_2(\otimes_{i=1}^me_i^*\otimes{e^*}_i')\\
	 &=(-1)^{\varepsilon(e^*,{e^*}')}\widetilde{\mu}^{*}_{X}\circ g
	  (\otimes_{i=1}^{m}e_i^*\cup {e^*}_i'),
   \end{align*}
   in which from the second line to the third we use the (co)chain homotopy 
   and change $\widetilde{T}^*$ for the sign $(-1)^{\varepsilon(e^*,{e^*}')}$. 
	  Now we prove the part about the subcomplex $A$.
	  Denote all cells of $A$ by $E_{A}$,
	  we can consider 
	  $E_{A}$ as a subcategory of $\mathrm{Top}^m$ with inclusions as
	  morphisms, then there is a chain equivalence (see e.g. \cite[Theorem 31.5]{munkres}, together 
	  with the fact that each $e$ in $E_{A}$ is a {\em neighborhood 
	  deformation retract} of $A=\bigcup_{e\in E_{A}}e$) 
	  \[
		S(A)\cong\mathrm{colim}_{e\in E_{A}}S(\prod_{i=1}^{m}e_i)
		\]
		where $e=\prod_{i=1}^{m}e_i$ with each simplex $e_i$ in $K_i$. Next, from the 
	  naturality of $\mu$ we have the diagram
	  \[
		\begin{CD}
		  S(X)@>\widetilde{\mu}_{X}>> \otimes_{i=1}^{m}S(|K_i|)\\
           @AAA                       @AAA \\
		   S(A)\cong\mathrm{colim}_{e\in E_{A}}S(\prod_{i=1}^{m}e_i)
		   @>\widetilde{\mu}_{X}|_{A} >>\mathrm{colim}_{e\in E_{A}}\otimes_{i=1}^{m}S(e_i),  
		\end{CD}
		\]
		to which after taking $\mathrm{Hom}$, it is not difficult to observe
		\[
	  \begin{CD}
		C_e^*(X)\cong (\otimes_{i=1}^{m}C^*(K_i))
		@>\widetilde{\mu}^*_{X}\circ g >> S^*(X)\\
     @VVV                        @VVV                     \\    
	 C_e^*(A) @>\widetilde{\mu}^*_{X}|_A\circ g_{A} >> S^*(A).
	\end{CD}
	  \]
	  where $g_A:C_e^*(A)\to \mathrm{Hom}
	  (\mathrm{colim}_{e\in E_{A}}\otimes_{i=1}^{m}S(e_i),\mathbb{Z})$ 
	  is defined by evaluating 
	  each $\tilde{e}^*\in C^*_{e}(A)$ non-trivially on its dual 
	  $\tilde{e}\in\mathrm{colim}_{e\in E_{A}}\otimes_{i=1}^{m}S(e_i)$ only. Then we can 
	  replace everything of diagram \eqref{chain homotopy} by colimits, and repeat
	  the process for $X$.
	\end{proof}
	It is not difficult to find the sign rule for commutativity defined 
	by $\varepsilon$ coincides with the one in $R/\mathcal{I}_{K}$, 
	which is defined in \eqref{relation}. Therefore we have completed all proofs.

	\bibliographystyle{amsalpha}
	
	\end{document}